\documentclass[12pt]{article}
\usepackage{amsmath,amssymb,amsthm,amscd,graphicx}
\usepackage{latexsym}
\usepackage{enumerate}
\usepackage{amsbsy, amsfonts, textcomp}
\usepackage{comment}
\usepackage{array}
\usepackage[all]{xy}
\usepackage[small,nohug,heads=vee]{diagrams}
\usepackage[flushmargin]{footmisc}

\newtheorem{thm}{Theorem}[section]

\newtheorem{rem}[thm]{Remark}

\newtheorem{lema}[thm]{Lemma}
\newtheorem{coro}[thm]{Corollary}
\newtheorem{prop}[thm]{Proposition}

\newcommand{\Q}{\mathbf Q}

\newcommand{\Z}{\mathbf Z}

\newcommand{\F}{\mathbf F}

\newcommand{\Gal}{\mathrm{Gal}}

\newcommand{\Norm}{\mathrm{Norm}}

\newcommand{\GL}{\mathrm{GL}}

\newcommand{\End}{\operatorname{End}}

\newcommand{\Perm}{\operatorname{Perm}}

\newcommand{\wk} {{\widetilde{K}}}



\title{On the Galois correspondence theorem in separable Hopf Galois theory}
\author{Teresa Crespo, Anna Rio, Montserrat Vela}
\date{\today}

\begin{document}
\maketitle

\let\thefootnote\relax\footnote{T. Crespo acknowledges support by grants MTM2012-33830, Spanish
Science Ministry, and 2009SGR 1370; A.Rio and M. Vela acknowledge
support by grants MTM2012-34611, Spanish Science Ministry, and
2009SGR 1220. \\ MSC 2010: Primary 12F10; Secondary: 13B05, 16T05. \\ Keywords: Hopf algebra, Hopf Galois theory, Galois correspondence.}

\begin{abstract}

In this paper we present a reformulation of the Galois
correspondence theorem of Hopf Galois theory in terms of groups
carrying farther the description of Greither and Pareigis. We
prove that the class of Hopf Galois extensions for which the
Galois correspondence is bijective is larger than the class of
almost classically Galois extensions but not equal to the whole
class. We show as well that the image of the Galois correspondence
does not determine the Hopf Galois structure.
\end{abstract}

\section{Introduction}

A finite field extension $K/k$ is a Hopf Galois extension if there
exists a finite cocommutative $k-$Hopf algebra   $H$  such that
$K$ is an $H-$module algebra and the $k-$linear map $j:K\otimes_k
H\to\End_k(K)$, defined by $j(s\otimes h)(t)=s(ht)$ for $h\in H$,
$s,t\in K$, is bijective. Clearly a finite Galois extension $K/k$ with Galois group $G$ is a Hopf Galois extension with Hopf
algebra the group algebra $k[G]$.

The concept of Hopf Galois extension was introduced by Chase and Sweedler to study inseparable extensions.
For a Hopf Galois extension $K/k$ with Hopf algebra $H$, they prove that the map from the set of  sub-Hopf algebras of $H$ to the
set of intermediate fields of $K/k$ sending a sub-Hopf algebra $H'$ of $H$ to the
subfield of $K$ fixed by $H'$ is inclusion reversing and injective (\cite{CS} Theorem 7.6).

Greither and Pareigis (\cite{GP}, Theorem 2.1) give a
characterization and classification of Hopf Galois structures on
separable field extensions, achieved by transforming the problem
into a group-theoretic one involving the Galois group $G$ of the
Galois closure of the field extension considered. They introduce
the subclass of almost classically Galois extensions, which can be
given a Hopf Galois structure such that the Galois correspondence
is bijective, prove that all extensions of degree smaller than
five are almost classically Galois and provide an example of a
Hopf Galois extension which is not almost classically Galois,
namely a degree 16 extension of a quadratic number field (see
\cite{GP} \S 4). In \cite{CRV}, we checked that all Hopf Galois
extensions of degree up to 7 are almost classically Galois and we
presented an example of a degree 8 extension of $\Q$ which is Hopf
Galois but not almost classically Galois (loc. cit. Example 2.1).

In \cite{Childs2000}, Childs  uses Hopf Galois structures to obtain arithmetic properties of wildly ramified extensions. A more detailed account of concepts and achievements in Hopf Galois theory can be found in \cite{CRV2}.

In contrast to what happens in the Galois case, the Hopf Galois
structure is not unique in general. This fact arises the question
on the number of different Hopf Galois structures which can be
given to a Galois extension (see e.g. \cite{Byott96},
\cite{Byott2004}). A different question which has  hardly been
considered concerns the image of the Galois correspondence for
each of the Hopf Galois structures, i.e. which intermediate fields
are fixed fields of a sub-Hopf algebra of the Hopf algebra. In this
context we may ask whether the class of Hopf Galois extensions
admitting a Hopf Galois structure for which the Galois
correspondence is bijective is bigger than the class of almost
classically extensions, if different Hopf Galois structures may
have Galois correspondences with the same image or more generally
which are the sublattices of the lattice of intermediate fields of
the extension corresponding to some Hopf Galois structure.

In this paper we give a reformulation of the Galois correspondence
theorem in term of groups, carrying farther the description of
Greither and Pareigis. We approach the question on the image of
the Galois correspondence by studying two different families of
field extensions. For the first one, we consider extensions whose
Galois closure has Galois group the Frobenius group $F_{p(p-1)}$,
for $p$ prime. We obtain non almost classically Galois extensions
which may be given a Hopf Galois structure for which the Galois
correspondence is bijective and different Hopf Galois structures
giving the same image for the Galois correspondence. For the
second family, we consider Galois extensions with Galois group the
dihedral group $D_{2p}$. In this case, each of the Hopf Galois
structures provides a different sublattice of the lattice of
intermediate extensions. The fact that there are non almost
classically Galois extensions which may be endowed with a Hopf
Galois structure giving a bijective Galois correspondence raises
the question whether all Hopf Galois extensions may be endowed
with such a Hopf Galois structure. In the last section we answer
this question negatively by exhibiting a Hopf Galois extension for
which no Hopf Galois structure gives a bijective Galois
correspondence.

\section{A reformulation of the Galois correspondence theorem in terms of groups}

In the sequel, we shall use the following notation. $K/k$ denotes
a separable extension, $n$ its degree, $\widetilde K/k$ its Galois
closure, $G=\Gal(\tilde K/k)$ and $G'=\Gal(\tilde K/K)$.

In the separable case the Hopf algebras giving a Hopf Galois
structure are forms of some group algebra. More precisely
they are Hopf algebras of the form $\wk[N]^G$ with $N$ as in \cite{GP} Theorem 2.1. The
next two propositions give a description of the sub-Hopf algebras
of a group algebra and of the algebra $\wk[N]^G$.

\begin{prop}
Let $k$ be a field and $G$ a finite group. The sub-Hopf algebras of  $k[G]$ are the group algebras
 $k[H]$, with $H$ a subgroup of  $G$.
\end{prop}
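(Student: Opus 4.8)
The plan is to prove the two inclusions separately, the nontrivial content being that every sub-Hopf algebra is spanned by the group elements it contains. Recall that $k[G]$ carries the Hopf structure in which every $g \in G$ is group-like, i.e. $\Delta(g) = g \otimes g$, $\varepsilon(g)=1$, with antipode $S(g)=g^{-1}$. For the easy inclusion, if $H \le G$ then $k[H]$ is a subalgebra containing $1=e$, and it is closed under $\Delta$, $\varepsilon$ and $S$ since these are already diagonal on the basis $H$; hence $k[H]$ is a sub-Hopf algebra.

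For the converse, let $A \subseteq k[G]$ be a sub-Hopf algebra. First I would record the standard fact that the group-like elements of $k[G]$ are exactly the elements of $G$: writing the group-like condition $\Delta(x)=x\otimes x$, $\varepsilon(x)=1$ for $x=\sum_g a_g\, g$ forces $a_g a_h = 0$ for $g \neq h$, $a_g^2 = a_g$, and $\sum_g a_g = 1$, so exactly one coefficient equals $1$ and the rest vanish. Consequently the set $H := A \cap G$ consists precisely of the group-like elements lying in $A$.

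The heart of the argument is to show $A = k[H]$, for which it suffices to prove that every $x \in A$ is a linear combination of elements of $H$. Take $x = \sum_{i=1}^m a_i\, g_i \in A$ with the $g_i \in G$ distinct and all $a_i \neq 0$. Since comultiplication is diagonal on the basis, $\Delta(x) = \sum_i a_i\, g_i \otimes g_i$ lies in $A \otimes A$. Now for the dual basis functional $g_j^* : k[G] \to k$ determined by $g_j^*(g_i) = \delta_{ij}$, the map $\mathrm{id} \otimes g_j^*$ sends $A \otimes A$ into $A$, and applying it to $\Delta(x)$ yields $a_j\, g_j \in A$; as $a_j \neq 0$ this gives $g_j \in A$, whence $g_j \in H$. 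Therefore the support of every element of $A$ lies in $H$, so $A \subseteq k[H]$, and the reverse inclusion $k[H] \subseteq A$ holds since $A$ is a subspace containing $H$. Finally $H$ is a subgroup of $G$: it contains $e$, it is closed under the product because $A$ is a subalgebra and $gh \in G$ for $g,h \in H$, and it is closed under inversion because $S(A) \subseteq A$ and $S(g)=g^{-1}$.

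The one step deserving care — and the place I would expect any difficulty — is the support-extraction step: it is precisely the diagonal form of $\Delta$ on the group basis that makes the functionals $\mathrm{id}\otimes g_j^*$ cut out individual group elements, and this is what forces a sub-coalgebra of $k[G]$ to be spanned by its group-like elements. Everything else reduces to the elementary closure properties of $H$.
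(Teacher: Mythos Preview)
Your proof is correct. The key step---applying $\mathrm{id}\otimes g_j^*$ to $\Delta(x)\in A\otimes A$ to extract each $g_j$ from the support of $x$---is sound: since $g_j^*$ is a linear functional on $k[G]$, the map $\mathrm{id}\otimes g_j^*$ carries $A\otimes A\subseteq k[G]\otimes k[G]$ into $A$, and the diagonal form of $\Delta$ then isolates $a_j g_j$.

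The paper's argument reaches the same conclusion by a different route. Rather than slicing with dual functionals, it performs Gaussian elimination on a basis of the sub-Hopf algebra $C$ to obtain an echelon basis $v_j=g_j+\sum_{i>m}\lambda_i^j g_i$, and then argues that the condition $\Delta(v_j)\in C\otimes C$ forces all $\lambda_i^j=0$ by comparing where the term $g_j\otimes g_j$ can appear among the basis tensors $v_i\otimes v_{i'}$. Your approach is shorter and more conceptual---it is the standard trick showing that a subcoalgebra of a pointed coalgebra is spanned by the group-likes it contains---while the paper's is more hands-on linear algebra and avoids invoking the dual. Both hinge on the same phenomenon, the diagonal shape of $\Delta$ on the group basis; your version also spells out the easy direction and the verification that $H$ is closed under product and inversion, which the paper leaves implicit.
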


\noindent {\it Proof.} Let $G=\{ g_1,g_2,\dots,g_n \}$ and $C$ a
sub-Hopf algebra of $k[G]$. In particular, $C$ is a vector
subspace of  the $k$-vector space $k[G]$, for which
$g_1,g_2,\dots,g_n$ is a basis. Gauss reduction and reordering of
the elements of $G$, if necessary, provides a basis
$(v_1,\dots,v_m)$ of $C$, where

$$v_j  =  g_j+\sum_{i>m} \lambda_i^j g_i, \quad 1\leq j \leq m, $$

\noindent with $\lambda_i^j \in k$. Since $C$ is sub-coalgebra, we should
have $\Delta(v_j) \in C \otimes C, $ for all $j=1,\dots m$, where
$\Delta$ denotes the  coproduct in $k[G]$. We have

$$\Delta(v_j)  =  g_j\otimes g_j +\sum_{i>m} \lambda_i^j g_i\otimes g_i.$$

\noindent $C\otimes C$ has basis $\{ v_i \otimes v_j\}_{1\leq i,j\leq m}$
and expressing every  $v_i \otimes v_j$ in the basis $\{g_i\otimes
g_j\}_{1\leq i,j\leq n}$ of $k[G]\otimes k[G]$ we obtain {\small
$$\begin{array}{lll} v_1\otimes v_1 & = & g_1\otimes g_1 +\sum_{i>m} \lambda_i^1 g_1\otimes g_i +\sum_{i>m} \lambda_i^1 g_i\otimes g_1+\sum_{i>m,j>m} \lambda_i^1 \lambda_j^1 g_i\otimes g_j  \\v_1\otimes v_2 & = & g_1\otimes g_2 +\sum_{i>m} \lambda_i^2 g_1\otimes g_i +\sum_{i>m} \lambda_i^1 g_i\otimes g_2+\sum_{i>m,j>m} \lambda_i^1 \lambda_j^2 g_i\otimes g_j \\ && \dots \end{array} $$
}

When we write  $\Delta(v_1)$ in the basis of $C\otimes C$, we see
that $g_1\otimes g_1$ only appears in the expression of
$v_1\otimes v_1$. In general, in the expression of $v_i\otimes
v_j$ there appears $g_i\otimes g_j$ and this term does not appear
in any other $v_{i'}\otimes v_{j'}$. Therefore, the only
 possibility is $\Delta(v_1)=v_1\otimes v_1$, which implies $\lambda_i^1=0, i=m+1,\dots,n$.
Analogously, we obtain $\lambda_i^j=0, i=m+1,\dots,n$, for $j=2,\dots,m$.

Finally, since $C$ has basis $(g_1,g_2,\dots,g_m)$, as a
$k$-vector subspace of $k[G]$ and  $C$ is a subalgebra,
$\{g_1,g_2,\dots,g_m\}$ is a subgroup of $G$. \hfill $\Box$

\vspace{0.5cm}

For $G$ and $G'$ as above, we denote by $\lambda$ the morphism from $G$ into the symmetric group $S_n$ given by the action
of $G$ on the left cosets $G/G'$ by left translation.

\begin{prop} Let $N$ be a regular subgroup of $S_n$ normalized by $\lambda(G)$. \\
There is a bijection between the set of $k$-sub-Hopf algebras of
$\wk[N]^G$ and the set of subgroups of $N$ stable under the action
of $\lambda(G)$ by conjugation.

\end{prop}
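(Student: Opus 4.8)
The plan is to prove the proposition by Galois descent along the finite Galois extension $\wk/k$, reducing the classification of $k$-sub-Hopf algebras of $\wk[N]^G$ to the classification of $\wk$-sub-Hopf algebras of a group algebra, which is exactly the content of Proposition 2.1. First I would fix the descent data: since $N$ is normalized by $\lambda(G)$, the group $G$ acts on $\wk[N]$ diagonally, via the Galois action on the coefficients in $\wk$ and via conjugation through $\lambda(G)$ on the elements of $N$, and this action is semilinear and compatible with the Hopf algebra structure. Because $\wk/k$ is Galois with group $G$, Galois descent provides a canonical isomorphism of $\wk$-Hopf algebras $\wk\otimes_k(\wk[N]^G)\xrightarrow{\sim}\wk[N]$, under which $\wk[N]^G$ is recovered as the subspace of $G$-invariants. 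I would record the induced inclusion-preserving bijection between $k$-subspaces $C$ of $\wk[N]^G$ and $G$-stable $\wk$-subspaces $D$ of $\wk[N]$, given by $D=\wk\cdot C$ and $C=D^G$.

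The key step is to verify that this bijection carries $k$-sub-Hopf algebras to $G$-stable $\wk$-sub-Hopf algebras and back. As the descent isomorphism is an isomorphism of $\wk$-Hopf algebras and the $G$-action commutes with all the structure maps (product, coproduct, unit, counit, antipode), a $G$-stable subspace $D$ is closed under these maps if and only if its space of invariants $C=D^G$ is closed under the corresponding maps of $\wk[N]^G$, which are the restrictions of those of $\wk[N]$. Hence $C$ is a $k$-sub-Hopf algebra of $\wk[N]^G$ precisely when $D=\wk\cdot C$ is a $\wk$-sub-Hopf algebra of $\wk[N]$ that is $G$-stable.

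It then remains to identify the $G$-stable $\wk$-sub-Hopf algebras of $\wk[N]$. Applying Proposition 2.1 to the field $\wk$ and the group $N$, every $\wk$-sub-Hopf algebra of $\wk[N]$ is of the form $\wk[M]$ for a unique subgroup $M\le N$. Since $\sigma\in G$ sends $\wk[M]$ to $\wk[\lambda(\sigma)M\lambda(\sigma)^{-1}]$, such a sub-Hopf algebra is $G$-stable exactly when $M$ is stable under conjugation by $\lambda(G)$. Composing the two correspondences yields the asserted bijection, sending a $\lambda(G)$-stable subgroup $M$ of $N$ to the $k$-sub-Hopf algebra $\wk[M]^G$ of $\wk[N]^G$, with inverse $C\mapsto M$, where $M$ is the subgroup determined by $\wk\cdot C=\wk[M]$.

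The real content, and the step I expect to be most delicate, is the compatibility check of the middle paragraph: that Galois descent respects the full Hopf structure and not merely the underlying vector-space structure. Establishing the $G$-equivariance of the structure maps and the resulting closure statement is routine but is the heart of the argument, whereas the remaining steps are a direct application of descent and of Proposition 2.1. \hfill $\Box$
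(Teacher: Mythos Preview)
Your argument is correct and rests on the same core idea as the paper's: base-change to $\wk$, apply Proposition~2.1 to identify the $\wk$-sub-Hopf algebras of $\wk[N]$ with subgroups of $N$, and then return to $k$ by taking $G$-invariants. The difference is in the packaging. You invoke Galois descent once to produce a single inclusion-preserving bijection between $k$-sub-Hopf algebras of $\wk[N]^G$ and $G$-stable $\wk$-sub-Hopf algebras of $\wk[N]$, so that injectivity and surjectivity come together for free. The paper instead treats the two directions separately: surjectivity via the same base-change step $H\otimes_k\wk\simeq\wk[N']$, and injectivity via an explicit description of the elements of $\wk[N']^G$, for $N'$ stable, as linear combinations of $\lambda(G)$-orbit sums. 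Your descent formulation is cleaner and sidesteps that elementwise computation; the paper's version, by contrast, makes the invariants concretely visible, which is occasionally handy in applications. Either way, the ``delicate'' step you flag---$G$-equivariance of the Hopf structure maps on $\wk[N]$---is indeed routine, since the $G$-action on $N$ is by group automorphisms.
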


\noindent {\it Proof.} We consider the map

$$\begin{array}{cccc}
\Phi: & \{ \textrm{subgroups of } N  \} & \rightarrow  &\{
k\textrm{-sub-Hopf algebras of } \wk[N]^G \} \\ & N' & \mapsto &
(\wk[N'])^G
\end{array}$$

\noindent where $(\wk[N'])^G=\{  x \in \wk[N'] | \sigma(x)=x,
\forall \sigma \in G \}$ and  for $x=\sum_{\tau \in N'} a_{\tau}
\tau \in \wk[N]$ and $\sigma \in G$, $\sigma(x)=\sum_{\tau \in N'}\sigma(a_{\tau})
\lambda(\sigma)\tau \lambda(\sigma)^{-1}$.

We want to prove that, if $N'$ is a subgroup of $N$, then
$\Phi(N')=\Phi(\overline{N'})$, where $\overline{N'}=\cap_{\sigma
\in G} \lambda(\sigma) N' \lambda(\sigma)^{-1}$. Clearly
$\overline{N'}\subset N' \Rightarrow \Phi(\overline{N'})\subset
\Phi(N')$. If $x=\sum_{\tau \in N'} a_{\tau} \tau \in
\Phi(N')=\wk[N]^G$, then $\sigma(x)=x$, for all $\sigma \in G$ and
so

\begin{equation}\label{eq}
\sum_{\tau \in N'} a_{\tau} \tau=\sum_{\tau \in N'}
\sigma(a_{\tau}) \lambda(\sigma) \tau \lambda(\sigma)^{-1}, \,
\forall \sigma \in G.
\end{equation}

\noindent If $\rho \in N' \setminus \overline{N'}$, there exists
$\sigma \in G$ such that $\rho \not \in \lambda(\sigma) N'
\lambda(\sigma)^{-1}$. Hence equality (\ref{eq}) for this
$\sigma$, implies $a_{\rho}=0$. We have then $(\wk[N'])^G \subset
\wk[\overline{N'}]$ and so, $(\wk[N'])^G \subset
(\wk[\overline{N'}])^G$.

Let us see now that $\Phi$ is surjective. Let $H$ be a
$k$-sub-Hopf algebra of $\wk[N]^G$. Then $H\otimes \wk$ is a
$\wk$-sub-Hopf algebra of $\wk[N]^G \otimes \wk \simeq \wk[N]$ and
so $H\otimes \wk \simeq \wk[N']$ for some subgroup $N'$ of $N$.
Since $H$ is a $k$-algebra, we have $H=(H \otimes
\wk)^G=\wk[N']^G=\Phi(N')$. We have then a surjective map

$$\begin{array}{ccc}
 \{ \textrm{stable subgroups of } N  \} & \rightarrow  &\{
k\textrm{-sub-Hopf algebras of } \wk[N]^G \} \\  N' & \mapsto &
(\wk[N'])^G
\end{array}$$

\noindent Indeed, for any $k$-sub-Hopf algebra $H$ of $\wk[N]^G$,
we have $H=\Phi(N')$ for some subgroup $N'$ of $N$ and
$\Phi(N')=\Phi(\overline{N'})$, with $\overline{N'}$ stable. It
remains to prove that different stable subgroups give different
subalgebras. But, if $N'$ is a subgroup of $N$, stable under the
action of $\lambda(G)$, the elements in $\Phi(N')$ are precisely
the elements of the form $\sum_C a_C (\sum_{\tau \in C} \tau)$,
where $C$ runs over the conjugation classes of $S_n$ having
nonempty intersection with $N'$. \hfill $\Box$

\vspace{0.5cm}

Now, the Galois correspondence theorem can be reformulated in the following way:

\begin{thm} If $K/k$ is a Hopf Galois extension with Hopf algebra
$H=\wk[N]^G$
for a regular subgroup $N$ of $Perm(G/G')$, then the map \\
\empty \qquad \qquad\\
$
\begin{array}{rcl}
{\cal F}_N:\{\mbox{Subgroups }N' \subseteq N  \textrm{ stable under } \lambda(G)\} &\longrightarrow&\{\mbox{Fields }E\mid k\subseteq E\subseteq K\}\\
N'&\mapsto &K^{\wk[N']^G}
\end{array}
$ \\

\noindent is injective and inclusion reversing.
\end{thm}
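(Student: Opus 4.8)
The plan is to reduce the theorem to the two propositions already established, together with the classical Chase–Sweedler result that the fixed-field map is injective and inclusion reversing on sub-Hopf algebras. The key observation is that $\mathcal{F}_N$ factors as a composition: first send a stable subgroup $N'$ to the sub-Hopf algebra $\wk[N']^G = \Phi(N')$ via the map of Proposition 2 (the second proposition), and then send this sub-Hopf algebra to its fixed field $K^{\wk[N']^G}$ via the Chase–Sweedler correspondence.

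First I would invoke Proposition 2 to establish that $\Phi$ restricts to a bijection between the set of \emph{stable} subgroups $N' \subseteq N$ (those stable under conjugation by $\lambda(G)$) and the set of all $k$-sub-Hopf algebras of $\wk[N]^G$. The proof of that proposition already shows surjectivity and that distinct stable subgroups yield distinct sub-Hopf algebras (the injectivity coming from the explicit basis description $\sum_C a_C(\sum_{\tau\in C}\tau)$ indexed by conjugacy classes meeting $N'$), so $\Phi$ is a bijection on the domain of $\mathcal{F}_N$. Moreover $\Phi$ is manifestly inclusion \emph{preserving}: if $N_1' \subseteq N_2'$ are both stable, then $\wk[N_1'] \subseteq \wk[N_2']$ and hence $\wk[N_1']^G \subseteq \wk[N_2']^G$.

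Next I would apply the Chase–Sweedler theorem (\cite{CS} Theorem 7.6), quoted in the introduction, which asserts that the map sending a sub-Hopf algebra $H'$ of $H = \wk[N]^G$ to its fixed field $K^{H'}$ is injective and inclusion reversing. Composing an inclusion-preserving bijection $\Phi$ with an inclusion-reversing injection gives an inclusion-reversing injection, which is exactly the assertion for $\mathcal{F}_N$. Concretely, for stable subgroups with $N_1' \subseteq N_2'$ we get $\Phi(N_1') \subseteq \Phi(N_2')$ and then $K^{\Phi(N_2')} \subseteq K^{\Phi(N_1')}$, so $\mathcal{F}_N(N_2') \subseteq \mathcal{F}_N(N_1')$; and injectivity of $\mathcal{F}_N = (H' \mapsto K^{H'}) \circ \Phi$ follows since both factors are injective.

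The main obstacle, and the step I would treat most carefully, is verifying that the Chase–Sweedler injectivity genuinely applies in this separable, descended setting: one must confirm that $\wk[N]^G$ really is the Hopf algebra giving the Hopf Galois structure (so that its sub-Hopf algebras are exactly those classified by Proposition 2) and that the sub-Hopf algebras arising as $\Phi(N')$ exhaust the sub-Hopf algebras to which Theorem 7.6 of \cite{CS} refers. Since Proposition 2 already identifies \emph{all} $k$-sub-Hopf algebras of $\wk[N]^G$ with the images $\Phi(N')$ for stable $N'$, there is no gap: the domain of $\mathcal{F}_N$ is in bijection with the full source of the Chase–Sweedler map, and no sub-Hopf algebra is missed. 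I would remark that bijectivity of $\mathcal{F}_N$ is \emph{not} claimed here, only injectivity, so no surjectivity argument is required; the content is entirely the factorization and the transport of the two monotonicity properties through it.
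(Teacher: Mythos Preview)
Your proposal is correct and matches the paper's approach exactly: the paper presents this theorem as an immediate reformulation, without explicit proof, following directly from Proposition~2.2 (the bijection between stable subgroups of $N$ and sub-Hopf algebras of $\wk[N]^G$) together with the Chase--Sweedler theorem cited in the introduction. Your factorization $\mathcal{F}_N = (H'\mapsto K^{H'})\circ\Phi$ is precisely the intended argument.
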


\begin{rem}\label{rem} {\rm If the regular subgroup $N$ of $S_n$ normalized by
$\lambda(G)$ is not contained in the alternating group $A_n$, then
the subgroup $N_1:=N\cap A_n$ is stable under conjugation by
$\lambda(G)$ and has index 2 in $N$. We have $\dim_k
\widetilde{K}[N]^G=n$ and $\dim_k \widetilde{K}[N_1]^G=n/2$. The
field ${\cal F}_N(N_1)$ is then a quadratic extension of~$k$.}
\end{rem}

\section{A family of Hopf Galois extensions}

We consider an extension $K_0/k$ of prime degree $p\geq 5$ with Galois closure $\widetilde{K}$ such that the Galois group $G$ of $\widetilde{K}|k$ is the Frobenius group $F_{p(p-1)}$. The group $\Gal(\tilde K/K_0)$ is a Frobenius complement of $F_{p(p-1)}$. Let $d$ be a divisor of $p-1$, $1<d<p-1$, $G'$  the subgroup of  $\Gal(\tilde K/K_0)$ with index $d$ and $K=\widetilde{K}^{G'}$ the subfield of $\widetilde{K}$ fixed by $G'$. We shall study the extensions $K|k$. Let us note that over $k=\Q$ the Eisenstein polynomial $X^p-p$ has Galois group $F_{p(p-1)}$, hence the extension $K_0/\Q$ obtained by adjoining a root of this polynomial satisfies the above conditions.

\subsection{Hopf Galois character}

\begin{prop}
The extension $K|k$ is Hopf Galois, for all prime $p\geq 5$.
\end{prop}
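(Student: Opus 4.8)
The plan is to apply the Greither--Pareigis criterion (\cite{GP}, Theorem 2.1): the extension $K|k$ is Hopf Galois exactly when $\Perm(G/G')$ contains a regular subgroup $N$ normalized by $\lambda(G)$, so it suffices to exhibit one such $N$. First I would fix coordinates. Writing $G=F_{p(p-1)}=\langle\sigma\rangle\rtimes\langle\tau\rangle$ with $\sigma$ of order $p$ generating the normal Sylow $p$-subgroup and $\tau$ of order $p-1$ acting by $\tau\sigma\tau^{-1}=\sigma^{g}$ for a primitive root $g$ modulo $p$, the Frobenius complement $\Gal(\widetilde{K}/K_0)$ is $\langle\tau\rangle$, and $G'$ is its unique subgroup of index $d$, namely $G'=\langle\tau^{d}\rangle$ of order $(p-1)/d$. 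Hence $n=[G:G']=pd$.

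Next I would make the coset space explicit. Since $G'\subseteq\langle\tau\rangle$ and $\sigma^{i}\tau^{j}\,\tau^{dk}=\sigma^{i}\tau^{j+dk}$, the coset $\sigma^{i}\tau^{j}G'$ depends only on $i\bmod p$ and $j\bmod d$, giving an identification $G/G'\cong\F_p\times\Z/d$. A direct computation of the left-translation action then yields
\[
\lambda(\sigma)\colon (i,\bar{j})\mapsto (i+1,\bar{j}),\qquad
\lambda(\tau)\colon (i,\bar{j})\mapsto (gi,\overline{j+1}),
\]
so that in general $\lambda(\sigma^{a}\tau^{s})$ sends $(i,\bar{j})$ to $(g^{s}i+a,\overline{j+s})$.

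The candidate is $N=T$, the group of translations $t_{a,\bar{b}}\colon (i,\bar{j})\mapsto (i+a,\bar{j}+\bar{b})$ of the abelian group $\F_p\times\Z/d$. This is the regular representation of $\F_p\times\Z/d\cong\Z/pd$ (cyclic because $\gcd(p,d)=1$, as $d\mid p-1$), hence a regular subgroup of $\Perm(G/G')$ of order $pd=n$. It remains to check that $\lambda(G)$ normalizes $T$. Since $\lambda(\sigma)\in T$ and $T$ is abelian, $\lambda(\sigma)$ normalizes $T$; and a short conjugation computation gives $\lambda(\tau)\,t_{a,\bar{b}}\,\lambda(\tau)^{-1}=t_{ga,\bar{b}}\in T$. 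As $\lambda(\sigma)$ and $\lambda(\tau)$ generate $\lambda(G)$, it follows that $\lambda(G)\subseteq\Norm_{\Perm(G/G')}(N)$, and Greither--Pareigis gives the claim.

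The only delicate point is bookkeeping the semidirect-product action so that both coordinates of $G/G'$ are handled correctly: one must verify that the shift $\bar{j}\mapsto\overline{j+1}$ built into $\lambda(\tau)$ does not obstruct normalization, which is precisely why $T$ is taken to translate both coordinates rather than only the first. Indeed the Sylow subgroup $\langle\sigma\rangle$ alone acts with $d$ orbits of size $p$, so it is not regular and the second coordinate is genuinely needed. No step is a serious obstacle; the whole content of the argument is the explicit conjugation identity $\lambda(\tau)\,t_{a,\bar{b}}\,\lambda(\tau)^{-1}=t_{ga,\bar{b}}$.
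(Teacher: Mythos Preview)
Your argument is correct. The identification $G/G'\cong\F_p\times\Z/d$ and the computations of $\lambda(\sigma)$, $\lambda(\tau)$, and the conjugation $\lambda(\tau)\,t_{a,\bar b}\,\lambda(\tau)^{-1}=t_{ga,\bar b}$ are all accurate, so the translation group $T\cong C_{pd}$ is indeed a regular subgroup normalized by $\lambda(G)$ and Greither--Pareigis applies.

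However, this is not the route the paper takes for \emph{this} proposition. The paper's proof is a two-line tower argument: $K_0/k$ is Hopf Galois by Childs' theorem on prime-degree extensions with solvable closure, $K/K_0$ is Galois because $\widetilde{K}/K_0$ is cyclic, and a general ``Hopf Galois below + Galois above $\Rightarrow$ Hopf Galois'' result (\cite{CRV}, Theorem~6.1) finishes. No explicit $N$ is produced at this stage. What you have done is essentially the paper's \emph{later} construction of the cyclic structure $N_2$ (Section~3.1, in slightly different coordinates), pulled forward to serve as the existence proof. Your approach is more self-contained and immediately yields an explicit Hopf Galois structure; the paper's approach is shorter given the cited black boxes and cleanly separates the existence question from the classification of structures that follows.
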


\noindent {\it Proof.} We consider the tower of fields $k\subset K_0 \subset K \subset \widetilde{K}$. The extension $K_0/k$  is Hopf Galois, since it is a prime degree extension such that its Galois closure has a solvable group (see \cite{Childs89}); the extension $K/K_0$ is Galois since $\widetilde{K}/K_0$ is cyclic. This implies that $K/k$ is Hopf Galois by \cite{CRV} theorem 6.1. \hfill $\Box$

\vspace{0.5cm}
Let us see now if $K/k$ is almost classically Galois. Let us recall that the group $F_{p(p-1)}$ can be seen as a subgroup of $\GL(2,p)$, namely

$$F_{p(p-1)} = \left\{ \left( \begin{array}{cc} 1 & b \\ 0 & c \end{array} \right) : b \in \F_p, c \in \F_p^* \right\} \subset \GL(2,p).$$

\noindent For each divisor $d$ of $p-1$, $F_{p(p-1)}$ has a unique normal subgroup $F_{pd}$ corresponding to the unique subgroup $C_{d}$ of order $d$ of $\F_p^*$:

$$F_{pd}=\left\{\begin{pmatrix} 1&b\\0&c\end{pmatrix} : b\in\F_p, c \in C_{d} \right \}.$$

\noindent Since $G'$ has order $(p-1)/d$, the only candidate to be a normal complement for $G'$ in $G$ is $F_{pd}$. This group contains all elements of order $\ell$ in $F_{p(p-1)}$, for every divisor $\ell$ of $d$, which are of the form $\left(\begin{smallmatrix} 1&b\\0&c \end{smallmatrix}\right)$, with $c$ of order $\ell$ in $\F_p^*$. We obtain then two cases, depending on $D:=\gcd((p-1)/d,d)$.

\begin{itemize}
\item If $D \neq 1$, let $\ell$ be a prime number dividing $D$. The group $G'$ has some element of order $\ell$, since $\ell$ is a prime number dividing $|G'|$. Therefore, $G'$ and $F_{pd}$ have nontrivial intersection, $G'$ has no normal complement in $G$ and $K/k$ is not almost classically Galois.
\item If $D = 1$, the subgroups $F_{pd}$ and $G'$ intersect in $0$, hence $G'$ has a normal complement, the extension $K/k$ is almost classically Galois and the structure is given by $F_{pd}$.
\end{itemize}

We state what we have proved so far in the following proposition.

\begin{prop}
Let $p \geq 5$ be a prime number, $d$ a nontrivial divisor of $p-1$. An extension $K/k$ of degree $pd$ such that its Galois closure $\widetilde{K}$ has Galois group over $k$ the Frobenius group $F_{p(p-1)}$ is almost classically Galois if and only if $\gcd((p-1)/d,d)=1$.
\end{prop}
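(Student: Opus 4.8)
The statement repackages the case distinction carried out just above, so the plan is to make explicit the purely group-theoretic reduction on which it rests. Recall that $K/k$ is almost classically Galois exactly when $G'=\Gal(\widetilde{K}/K)$ admits a normal complement in $G=F_{p(p-1)}$, i.e.\ a normal subgroup $N\trianglelefteq G$ with $N\cap G'=\{1\}$ and $NG'=G$. First I would settle the order bookkeeping: since $\Gal(\widetilde{K}/K_0)$ is a Frobenius complement of order $p-1$ and $G'$ has index $d$ in it, one gets $|G'|=(p-1)/d$ and hence $[G:G']=pd$. Any complement $N$ of $G'$ must therefore have order $pd$, so the whole question becomes whether a normal subgroup of that order meets $G'$ trivially.

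The next step is to pin down the normal subgroups of order $pd$. Writing $T$ for the translation subgroup $\{\left(\begin{smallmatrix}1&b\\0&1\end{smallmatrix}\right):b\in\F_p\}$, which is the unique (hence normal) Sylow $p$-subgroup of $G$, I note that any subgroup of order $pd$ has a Sylow $p$-subgroup of order $p$ (here $p\nmid d$ because $d\mid p-1$), and this must coincide with $T$; so such a subgroup contains $T$, and its image in the cyclic quotient $G/T\cong\F_p^*$ is then the unique subgroup of order $d$, namely $C_d$. This forces it to equal $F_{pd}=T\rtimes C_d$, confirming both that $F_{pd}$ is the only candidate for a normal complement and that it is genuinely normal (as $C_d$ is characteristic in $\F_p^*$). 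By the order count $|F_{pd}|\,|G'|=p(p-1)=|G|$, triviality of the intersection already forces $F_{pd}G'=G$; thus $K/k$ is almost classically Galois if and only if $F_{pd}\cap G'=\{1\}$.

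Finally I would compute this intersection through the projection $\pi:G\to G/T\cong\F_p^*$. Because $G'$ lies inside a Frobenius complement, it meets $T$ trivially, so $\pi$ maps $G'$ isomorphically onto the unique subgroup $C_{(p-1)/d}$ of $\F_p^*$ of order $(p-1)/d$, while $\pi(F_{pd})=C_d$. A short check shows that $\pi$ restricts to an isomorphism $F_{pd}\cap G'\cong C_d\cap C_{(p-1)/d}$: injectivity comes from $G'\cap T=\{1\}$, and surjectivity from lifting each $c\in C_d\cap C_{(p-1)/d}$ back along $G'\cong C_{(p-1)/d}$, the lift automatically lying in $F_{pd}$ since its lower-right entry is in $C_d$. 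As $\F_p^*$ is cyclic, $C_d\cap C_{(p-1)/d}$ is the unique subgroup of order $\gcd(d,(p-1)/d)$, which is trivial precisely when $\gcd((p-1)/d,d)=1$. Combining the three steps yields the asserted equivalence.

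The one genuinely delicate point is the middle step, the claim that $F_{pd}$ is the \emph{only} possible normal complement. It relies on the normality and uniqueness of the Sylow $p$-subgroup $T$ together with the cyclicity of $\F_p^*$, and it is exactly what rules out any exotic complement that could otherwise spoil the clean $\gcd$ criterion.
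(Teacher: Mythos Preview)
Your proof is correct and follows essentially the same strategy as the paper: identify $F_{pd}$ as the unique possible normal complement of $G'$ and then decide when $F_{pd}\cap G'$ is trivial. The only minor difference is in the execution of the intersection step---the paper argues by picking a prime $\ell\mid\gcd(d,(p-1)/d)$ and noting that $F_{pd}$ contains every element of order $\ell$, whereas you compute $F_{pd}\cap G'\cong C_d\cap C_{(p-1)/d}$ directly via the projection $G\to G/T\cong\F_p^*$; both are standard and your version handles the two directions uniformly.
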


We shall now endow the extension $K/k$ with two different Hopf Galois structures. We consider two groups of order $pd$, the cyclic group $C_{pd}$ and the Frobenius group $F_{pd}$. Let us note that for a prime $d$, these are the unique groups of order $pd$.
We fix a generator $\zeta$ of $\F_p^*$ and write

$$
S=\begin{pmatrix} 1&0\\0&\zeta\end{pmatrix} \qquad T=\begin{pmatrix} 1&1\\0&1\end{pmatrix}
$$
Then
$$
G=\big\langle S, \ T\big\rangle=\left\{ S^iT^j=\begin{pmatrix} 1&j\\0&\zeta^i\end{pmatrix}\right\}_{j\bmod p, i\bmod p-1}
$$

\noindent We take $G'=\langle S^d\rangle$ and the left transversal of $G/G'$
$$
T^xS^m=\begin{pmatrix} 1&x\zeta^m\\0&\zeta^m\end{pmatrix}, \quad x\in\F_p,  \quad  0\le m<d.
$$

\noindent Let us note that the element $\left(\begin{smallmatrix} 1&b\\0&\zeta^i\end{smallmatrix}\right)$ of $G$  is in the class determined by $m=i \bmod d, \ x=b\zeta^{-i}$.
Let us identify the set $G/G'$ with $X=(\Z/(d)) \times \F_p$ by $(m,x) \leftrightarrow T^xS^m$. By computation of the action of the generators $T,S$ of $G$ on $G/G'$, we obtain that the image $\lambda(G)$ of $G$ in $S_{pd}=\Perm(X)$ is generated by

$$
\sigma_1: (m,x)\mapsto (m, \ x+1), \quad \sigma_2: (m,x)\mapsto (m+1, \ x\zeta^{-1}).
$$

The subgroup $F$ of $G$ isomorphic to $F_{pd}$ is $F=\langle S^{\frac{p-1}d}, \ T\rangle$. It can be seen as a regular subgroup of the symmetric group $S_{pd}$ via the action on itself by left translation. We identity $F$ with the set $X=(\Z/(d))\times \F_p$ by $(m,x) \leftrightarrow S^{m\frac{p-1}d}T^x$. By computation of the action of  the generators $T$ and $S^{\frac{p-1}d}$, we obtain that the image $N_1$ of $F$ in $\Perm(X)$ is generated by

$$
\tau_1: (m,x)\mapsto (m,  x+\zeta^{m\frac{p-1}d}), \quad \tau_2: (m,x)\mapsto (m+1, x).
$$

\noindent Let us see now that $\lambda(G)$ normalizes $N_1$. By computation, we obtain

$$\begin{array}{ccc} \sigma_1\tau_1\sigma_1^{-1}=\tau_1 & , &
\sigma_1\tau_2\sigma_1^{-1}=\tau_2 \\ \sigma_2\tau_1\sigma_2^{-1}=\tau_1^{\zeta^{-1-\frac{p-1}d}} & , &  \sigma_2\tau_2\sigma_2^{-1}=\tau_2.\end{array}
$$

We consider now the cyclic group $C:=C_{pd}$ of order $pd$. It can be seen as a regular subgroup of the symmetric group $S_{pd}$ via the action on itself by left translation. We have $C=C_d \times C_p$, so we may identity it with the set $X=(\Z/(d))\times \F_p$ in the obvious way. By computing the action of the generator $(1,1)$ of $C$, we obtain that the image $N_2$ of $C$ in $\Perm(X)$ is generated by

$$ \tau: (m,x) \mapsto (m+1,x+1).$$

\noindent Let us see now that $\lambda(G)$ normalizes $N_2$. By computation, we obtain

$$\sigma_1 \tau \sigma_1^{-1}=\tau , \quad \sigma_2 \tau \sigma_2^{-1}=\tau^k,$$

\noindent where $k$ is the integer in the range $[0,pd-1]$ determined by $k\equiv 1 \pmod{d}, k \equiv \zeta^{-1} \pmod{p}$.

\vspace{0.5cm}
We have then obtained the following result.

\begin{thm} Let $p \geq 5$ be a prime number, $d$ a nontrivial divisor of $p-1$. An extension $K/k$ of degree $pd$ such that its Galois closure $\widetilde{K}$ has Galois group over $k$ the Frobenius group $F_{p(p-1)}$ has (at least) two Hopf Galois structures given by the cyclic and Frobenius groups, respectively. The extension $K/k$ is almost classically Galois if and only if $\gcd((p-1)/d,d)=1$ and in this case the structure is given by the group $F_{pd}$.

\end{thm}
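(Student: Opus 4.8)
The statement is a synthesis of the material established just above, so the plan is to assemble those ingredients through the Greither--Pareigis classification (\cite{GP} Theorem 2.1), according to which the Hopf Galois structures on the separable extension $K/k$ are in bijection with the regular subgroups $N$ of $\Perm(G/G')$ normalized by $\lambda(G)$, the associated Hopf algebra being $\wk[N]^G$. First I would observe that the two subgroups exhibited above qualify. Both $N_1$, the image in $\Perm(X)=S_{pd}$ of the left regular representation of $F=\langle S^{\frac{p-1}d},T\rangle\cong F_{pd}$, and $N_2$, the image of the left regular representation of $C\cong C_{pd}$, have order $pd$ and act freely and transitively on $X$, hence are regular; and the conjugation identities computed above (for $\sigma_1\tau_i\sigma_1^{-1}$, $\sigma_2\tau_i\sigma_2^{-1}$ and for $\sigma_j\tau\sigma_j^{-1}$) show that the generators $\sigma_1,\sigma_2$ of $\lambda(G)$ normalize each of $N_1$ and $N_2$. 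Thus $N_1$ and $N_2$ are both regular and $\lambda(G)$-stable, so each yields a Hopf Galois structure on $K/k$. Since $N_1\cong F_{pd}$ is nonabelian (as $d>1$) while $N_2\cong C_{pd}$ is abelian, the two subgroups are non-isomorphic, hence distinct, and the two structures are genuinely different.

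For the equivalence, the assertion that $K/k$ is almost classically Galois if and only if $\gcd((p-1)/d,d)=1$ is precisely the preceding proposition, which I would simply cite. To pin down the structure in the affirmative case, I would recall that $K/k$ is almost classically Galois exactly when $G'$ admits a normal complement $N\trianglelefteq G$, and that the Hopf Galois structure realizing the bijective Galois correspondence is the one given by $\lambda(N)$, the image of $N$ under the left translation action on $G/G'$ (this image is regular because $N$ is a complement, and $\lambda(G)$-stable because $N$ is normal). The analysis preceding the cited proposition shows that the only possible normal complement is the unique normal subgroup of $G$ of order $pd$, namely $F_{pd}$, and that it is a genuine complement precisely when $\gcd((p-1)/d,d)=1$. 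Therefore, whenever the extension is almost classically Galois, the normal complement is $F_{pd}$, so $\lambda(F_{pd})\cong F_{pd}$ and the almost classically Galois structure is the one of Frobenius type, i.e. the structure given by $F_{pd}$.

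I do not anticipate a real obstacle, since the two computational ingredients --- the regularity and $\lambda(G)$-invariance of $N_1,N_2$, and the arithmetic criterion $F_{pd}\cap G'=1\iff\gcd((p-1)/d,d)=1$ --- were already carried out above. The single point meriting care is the last identification: one must confirm that the almost classically Galois structure is the Frobenius structure rather than the cyclic one. This is immediate at the level of type, because the normal complement is isomorphic to $F_{pd}$ and $F_{pd}\not\cong C_{pd}$; the only thing one might wish to double-check is that the regular subgroup $\lambda(F_{pd})$ built from the normal complement via the identification $X\leftrightarrow G/G'$ coincides with the subgroup $N_1$ built from the regular representation of $F$ via $X\leftrightarrow F$, which reduces to verifying that these two identifications of $X$ induce the same permutation subgroup.
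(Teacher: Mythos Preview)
Your proposal is correct and matches the paper's approach exactly: the theorem is stated in the paper as a summary (``We have then obtained the following result'') of the computations immediately preceding it, and you have assembled precisely those ingredients --- the regularity and $\lambda(G)$-normalization of $N_1$ and $N_2$, and the almost-classically-Galois criterion from the preceding proposition. Your final caveat about whether $\lambda(F_{pd})$ literally coincides with $N_1$ as a subgroup of $\Perm(X)$ is a scruple the paper does not address; the theorem's claim that ``the structure is given by the group $F_{pd}$'' is read at the level of type, which, as you note, is immediate since $F_{pd}\not\cong C_{pd}$.
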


\subsection{The Galois correspondence}

We may determine the intermediate fields of the extension $K/k$ by classical Galois theory applied to the Galois extension $\widetilde{K}/k$. Since $K$ is the subfield of $\widetilde{K}$ fixed by a cyclic group $G'$ of $G$, the intermediate fields of $K/k$ are in one-to-one correspondence to the subgroups of $G=F_{p(p-1)}$ containing $G'$. Writing again

$$F_{p(p-1)} = \left\langle S,T \right\rangle,$$

\noindent we shall describe the subgroups of $F_{p(p-1)}$. These are

\begin{enumerate}[$\bullet$]
\item the subgroups

$$F_{pd}= \left\langle S^{\frac{p-1} d}, T \right\rangle,$$

\noindent which are all normal subgroups and satisfy $F_{pd_1} \subset F_{pd_2}$ if and only if $d_1|d_2$. In particular, for $d=1$, we obtain the $p$-Sylow subgroup.
\item for each divisor $d$ of $p-1$ we have $p$ cyclic subgroups $C_d(b)$, $b \in \F_p$, of order $d$ which are all conjugate:

$$C_d(b)= \left\langle S^{\frac{p-1} d} T^b \right\rangle.$$

\noindent We have $C_{d_1}(b) \subset C_{d_2}(b)$ if and only if $d_1|d_2$;  $C_d(b_1)\cap C_d(b_2)=1$ if $b_1 \neq b_2$ and $C_d(b) \subset F_{pd}$, for all $b \in \F_p$.
\end{enumerate}

Since we fixed $G'=C_{(p-1)/d}(0)$, the subgroups of $G$ containing $G'$ are the groups $C_{(p-1)/d'}(0)$ and $F_{p(p-1)/d'}$, with $d'$ running over the divisors of $d$. Hence, for each divisor $d'$ of $d$, there is a field $L_{d'}^1$ such that $K_0 \subset L_{d'}^1 \subset \widetilde{K}$ and $[L_{d'}^1:K_0]=d'$ and a field $L_{d'}^2$ such that $[L_{d'}^2:k]=d'$, $L_{d'}^2 \cap K_0=k$ and $L_{d'}^2 \subset L_{d'}^1$. Moreover if $d_1|d_2$, then $L_{d_1}^1 \subset L_{d_2}^1$ and $L_{d_1}^2 \subset L_{d_2}^2$.

Let us look now at the subgroups of the two groups $N$ giving a Hopf Galois structure to $K/k$. For each $d'$ dividing $d$, the cyclic group $C_{pd}$ has exactly one subgroup of order $pd'$ and exactly one of order $d'$, namely the cyclic groups $C_{pd'}$ and $C_{d'}$, hence the Hopf Galois structure of type $C_{pd}$  yields a bijective Galois correspondence if and only if all of them are stable under the action of $G$. This is clear by the fact that $C_{pd}$ has a unique subgroup for each order. For each $d'$ dividing $d$, the group $F_{pd}$ has a subgroup $F_{pd'}$ of order $pd'$ and $p$ conjugate subgroups $C_{d'}(b)$ of order $d'$, hence the Hopf Galois structure of type $F_{pd}$  yields a bijective Galois correspondence if, for each $d'$, the subgroup $F_{pd'}$ and one of the subgroups $C_{d'}(b)$ are stable under the action of $G$. For $F_{pd'}$ this is clear. For $C_{d'}(b)$, since $\sigma_1$ normalizes $N_1$, it is enough to compute $\sigma_2 \tau_2^e \tau_1^b \sigma_2^{-1}$, where $e=d/d'$. We obtain
$$\sigma_2 \tau_2^e \tau_1^b \sigma_2^{-1}=\tau_2^e \tau_1^{b(\zeta^{-1}-\frac{p-1} d)}.$$

\noindent Now the powers of $\tau_2^e \tau_1^b$ are $(\tau_2^e \tau_1^b)^k=\tau_2^{ke} \tau_1^{b(1+\eta^e+\dots+\eta^{(k-1)e})}$, for $\eta=\zeta^{\frac{p-1} d}$, so the subgroup $C_{d'}(b)$ is stable exactly for $b=0$.

\vspace{0.5cm}
We have then obtained the following result.

\begin{thm} Let $p \geq 5$ be a prime number, $d$ a nontrivial divisor of $p-1$. Let $K/k$ be an extension of degree $pd$ such that its Galois closure $\widetilde{K}$ has Galois group over $k$ the Frobenius group $F_{p(p-1)}$. We can endow $K/k$ with a non almost classically Galois Hopf Galois structure of type $C_{pd}$ such that the Galois correspondence is one-to-one. We can also endow $K/k$ with a Hopf Galois structure of type $F_{pd}$, which is almost classically Galois exactly when $\gcd((p-1)/d,d)=1$ and such that the Galois correspondence is always one-to-one.

\end{thm}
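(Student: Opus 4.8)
The plan is to prove the theorem by combining the structural analysis of subgroups already carried out in this section with the group-theoretic reformulation of the Galois correspondence from Theorem~2.3, so that the entire statement reduces to counting and checking stability of subgroups. First I would recall that by Theorem~2.3 the image of the Galois correspondence ${\cal F}_N$ is in bijection with the set of subgroups $N'\subseteq N$ stable under conjugation by $\lambda(G)$, and that this map is injective and inclusion reversing. Since classical Galois theory identifies the full lattice of intermediate fields of $K/k$ with the subgroups of $G=F_{p(p-1)}$ containing $G'=C_{(p-1)/d}(0)$, which are exactly the $C_{(p-1)/d'}(0)$ and $F_{p(p-1)/d'}$ for $d'$ running over the divisors of $d$, the correspondence is bijective precisely when the number of stable subgroups of $N$ equals the number of such intermediate fields, namely $2\cdot\tau(d)-1$ (the two chains of length $\tau(d)$ sharing the top field $K$).

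For the cyclic structure $N=N_2\cong C_{pd}$, the key observation is that a cyclic group has a unique subgroup of each order dividing its own, so for each $d'\mid d$ there is exactly one subgroup of order $pd'$ and exactly one of order $d'$. Uniqueness forces each of these subgroups to be carried to itself under any automorphism, and in particular under conjugation by $\lambda(G)$; hence every subgroup of $N_2$ is stable, and the count of stable subgroups matches the count of intermediate fields. I would therefore conclude that ${\cal F}_{N_2}$ is bijective, observing that this structure is non almost classically Galois whenever $\gcd((p-1)/d,d)\neq1$ by the earlier Proposition, yet still yields a one-to-one Galois correspondence.

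For the Frobenius structure $N=N_1\cong F_{pd}$, the analysis is genuinely different because the subgroups of order $d'$ are no longer unique: for each $d'\mid d$ there is a single subgroup $F_{pd'}$ of order $pd'$, which is normal and hence automatically stable, together with $p$ conjugate subgroups $C_{d'}(b)$ of order $d'$. Here I would use the explicit conjugation formulas already displayed, reducing stability of $C_{d'}(b)=\langle \tau_2^e\tau_1^b\rangle$ (with $e=d/d'$) to the computation $\sigma_2\tau_2^e\tau_1^b\sigma_2^{-1}=\tau_2^e\tau_1^{b(\zeta^{-1}-\frac{p-1}{d})}$ together with the power formula $(\tau_2^e\tau_1^b)^k=\tau_2^{ke}\tau_1^{b(1+\eta^e+\dots+\eta^{(k-1)e})}$ for $\eta=\zeta^{\frac{p-1}{d}}$. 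Comparing exponents shows that the conjugate lands back in $\langle\tau_2^e\tau_1^b\rangle$ only when $b=0$, so that exactly one of the $p$ conjugates, namely $C_{d'}(0)$, is stable. Thus for each $d'\mid d$ we again obtain exactly one stable subgroup of each of the two orders, the stable subgroups match the intermediate fields in number, and ${\cal F}_{N_1}$ is bijective as well; the almost classically Galois criterion $\gcd((p-1)/d,d)=1$ is inherited from the preceding Proposition.

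The main obstacle I anticipate is the Frobenius case, and specifically verifying that the single stable subgroup $C_{d'}(0)$ together with $F_{pd'}$ really accounts for all intermediate fields without over- or under-counting. One must check that the stable-subgroup lattice of $N_1$ reproduces not just the right number of fields but the correct inclusion relations, so that injectivity of ${\cal F}_{N_1}$ plus the matching count genuinely forces surjectivity onto the full lattice; the delicate point is confirming that $b=0$ is the unique solution of the exponent congruence for every divisor $d'$, which hinges on $\zeta^{-1}-\frac{p-1}{d}$ being distinct from the geometric-sum multiplier $1+\eta^e+\cdots+\eta^{(k-1)e}$ except in the trivial case, and this is where the arithmetic of $\eta=\zeta^{\frac{p-1}{d}}$ in $\F_p^*$ must be pinned down carefully.
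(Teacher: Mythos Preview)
Your approach is essentially identical to the paper's: both arguments enumerate the intermediate fields of $K/k$ via the subgroups of $G$ containing $G'$, then check for each of $N_2\cong C_{pd}$ and $N_1\cong F_{pd}$ that the $\lambda(G)$-stable subgroups match this lattice, using uniqueness of subgroups in the cyclic case and the explicit conjugation and power formulas for $\tau_2^e\tau_1^b$ in the Frobenius case. Two small corrections are in order: the two chains of intermediate fields do \emph{not} share the top field (one chain runs from $K_0$ up to $K$ with degrees $p,pd',\dots,pd$, the other from $k$ up to $L_d^2$ with degrees $1,d',\dots,d$), so the correct count is $2\tau(d)$, not $2\tau(d)-1$; and the $C_{pd}$ structure is non almost classically Galois for \emph{every} $d$, not only when $\gcd((p-1)/d,d)\neq 1$, since the unique normal complement of $G'$ (when it exists) is isomorphic to $F_{pd}$, never to $C_{pd}$.
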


\begin{coro} There exist Hopf Galois extensions which are not almost classically Galois but may be endowed with a Hopf Galois structure such that the Galois correspondence is one-to-one.
\end{coro}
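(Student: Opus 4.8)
The plan is to specialise the results of this section to a single explicit extension. The preceding theorem already provides, for every prime $p\geq 5$ and every nontrivial divisor $d$ of $p-1$, a Hopf Galois structure of type $C_{pd}$ on the degree-$pd$ extension $K/k$ whose Galois correspondence is one-to-one, while the proposition characterising this family shows that $K/k$ fails to be almost classically Galois precisely when $\gcd((p-1)/d,d)\neq 1$. Thus the corollary follows as soon as one exhibits a pair $(p,d)$ with $\gcd((p-1)/d,d)\neq 1$.

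The smallest such pair is $p=5$, $d=2$: here $d$ is the unique nontrivial divisor of $p-1=4$ lying strictly between $1$ and $p-1$, and $\gcd((p-1)/d,d)=\gcd(2,2)=2\neq 1$. To realise this over $\Q$ I would invoke the remark at the start of the section: the splitting field $\widetilde{K}$ of $X^5-5$ has $\Gal(\widetilde{K}/\Q)=F_{20}$, with $K_0=\Q(\sqrt[5]{5})$ the degree-$5$ subfield. Since $\Gal(\widetilde{K}/K_0)$ is cyclic of order $4$, I would take $G'$ to be its index-$2$ subgroup and set $K=\widetilde{K}^{G'}$, obtaining a degree-$10$ extension $K/\Q$ of the required type.

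By the characterisation proposition, $K/\Q$ is not almost classically Galois, as $\gcd(2,2)=2$; by the preceding theorem it nevertheless admits the type-$C_{10}$ Hopf Galois structure with bijective Galois correspondence, which proves the corollary. There is no genuine obstacle at this stage: the substantive work lies in the preceding theorem, and all that is needed here is the elementary observation that the arithmetic condition $\gcd((p-1)/d,d)\neq 1$ can be met, for instance by $(p,d)=(5,2)$.
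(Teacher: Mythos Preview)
Your argument is correct and follows exactly the route the paper intends: the corollary is an immediate consequence of the preceding theorem together with Proposition~3.2, and you supply the obvious witness $(p,d)=(5,2)$ (with the concrete realisation via $X^5-5$ over $\Q$ that the paper itself mentions at the start of the section). There is nothing to add.
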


\section{A family of Galois extensions}

Let $p\ge 3$ be a prime number and $K/k$ a Galois extension with Galois group $G=D_{2p}$, the dihedral group  of order $2p$. The Hopf Galois structures of $K/k$ are determined in \cite[Thm. 6.2]{Byott2004}. There are $2+p$ structures of which $p$ are of type $C_{2p}$ and the two others correspond to $D_{2p}$ (the classical Galois one) and to its opposite group. We shall describe these Hopf Galois structures and see that the images of the corresponding Galois correspondences are all different.

We shall work with the following presentation of $D_{2p}$.

$$\begin{array}{rl}
D_{2p}=&<\sigma, \tau \mid \sigma^p=1, \tau^2=1, \tau \sigma=\sigma^{p-1} \tau> \\
&=\{ 1, \sigma, \sigma^2, \dots, \sigma^{p-1}, \tau,  \tau \sigma, \tau \sigma^2 , \dots, \tau \sigma^{p-1} \}
\end{array}.$$

We consider the embedding of $G$ in $S_{2p}$ given by the action of $G$ on itself by left translation.

$$\begin{array}{ll}
\lambda:&D_{2p} \hookrightarrow Perm(D_{2p})\simeq S_{2p} \\
&  g \mapsto \lambda(g):x \mapsto gx.
\end{array} $$

The Galois structure corresponds to $\rho(D_{2p})$, where $\rho: D_{2p} \hookrightarrow Perm(D_{2p})$ \linebreak $\simeq S_{2p}$ is given by $\rho(g)(x)=xg^{-1}$. The image of the Galois correspondence is then the whole lattice of intermediate fields and is given by the fundamental theorem of Galois theory. Since $D_{2p}$ has two conjugation classes of nontrivial subgroups, one class containing the normal subgroup of order $p$, which is generated by $\sigma$ and another class of length $p$ of subgroups of order $2$ generated by the elements $ \tau \sigma^k$, $k=0, \dots, p-1$, the subfield lattice consists, besides $k$ and $K$, in one normal extension of degree 2, and $p$ conjugate extensions of degree $p$.

$$\begin{diagram}
& &  & K & &  &\\
& \ruLine^{p}  &  &  \uLine^{2}  & \luLine^{2} & \luLine^{2}& \\
K^{<\sigma>} &  & & K^{<\tau >}&  \cdots & \cdots &   K^{< \tau \sigma^{p-1}>} \\
& \luLine_{2}  &  &\uLine_{p} &\ruLine_{p} &  \ruLine_{p}& \\
&  & & k & & &
\end{diagram}
$$

The nonclassical structure of type $D_{2p}$ is given by $\lambda(G)$. In this case, the image of the Galois correspondence consists, besides $k$ and $K$, in the normal extension of degree 2 (see \cite{GP} Theorem 5.3).

$$\begin{array}{c}
K \\
p \mid \\
K^{<\sigma>}\\
2 \mid \\
k
\end{array}
$$

In order to study the cyclic structures, we identify $D_{2p}$ with the set $Y=\F_2 \times \F_p$ by $\tau^i \sigma^j \leftrightarrow (i,j)$ and $S_{2p}$ with $\Perm(Y)$. We have then

$$\lambda(\sigma)(m,n)=(m,n+m), \lambda(\tau)(m,n)=(m+1,n).$$

\noindent Let us now write $C_{2p}=C_2\times C_p$ and identity it with $Y$. We have then $p$ embeddings of $C_{2p}$ in $\Perm(Y)$ given by sending a chosen generator of $C_{2p}$ to the permutation

$$\pi_c:(m,n) \mapsto (m+1,n+1+(-1)^m c), \quad c=0,\dots,p-1,$$

\noindent which is a $2p$-cycle. Let us denote $N_c=\langle \pi_c \rangle.$ Each of the groups $N_c\simeq C_{2p}$ have just two proper nontrivial subgroups of orders $2$ and $p$, which are normal. Since $N_c$ is not contained in the alternating group $A_{2p}$, by Remark \ref{rem}, the subgroup $N_c \cap A_{2p}=<\pi_c^2>$ is stable under conjugation by $\lambda(G)$. Then the field $K^{K[<\pi_c^2>]^G}=K^{<\sigma>}$ is in the image of the Galois correspondence theorem for all Hopf Galois structures. The subgroup $<\pi_c^p>$ is also stable under conjugation by $\lambda(G)$, since $\lambda(\sigma) \pi_c \lambda(\sigma^{-1})= \pi_c^p$. It corresponds then to the intermediate field $K^{K[<\pi_c^p>]^G}$, which has degree $p$ over $k$. We have

$$\begin{array}{rl}
K^{K[<\pi_c>^p]^G}=&\{ x \in K \mid \mu(h)(x)=\varepsilon(h)(x), \forall h \in k[<\pi_c^p>] \} =\\
=& \{ x \in K \mid \mu(\pi_c^p)(x)=\varepsilon(\pi_c^p)(x)=x\} =\\
=& \{ x \in K \mid \tau \sigma^{c}(x)=x \}=K^{<\tau\sigma^{c} >}
\end{array} $$

\noindent since the action of $\mu$ is given by $\mu(\pi_c^p)(x)=\pi_c^{-p}(1_G)(x)= \tau \sigma^{c}(x)$, $1_G$ is identified with $(0,0) \in Y$ and $\pi_c^p(0,0)=(1,c)$ corresponds to $\tau \sigma^c$.

We have then obtained that for each of the Hopf Galois cyclic structures of $K/k$ there is exactly one extension of degree $p$ in the image of the Galois correspondence. Its image gives the lattice

$$\begin{diagram}
& & K & &\\
&\ruLine^{p} & &\luLine^{2} & \\
K^{<\sigma>}& &  &  & K^{<\tau \sigma^{c}>}\\
&\luLine_{2} & &\ruLine_{p} &\\
& & k& &
\end{diagram}
$$
for $c$ an integer, $1 \leq c \leq p$.

\section{A Hopf Galois extension with non bijective Galois correspondence}

In this section we exhibit a Hopf Galois extension such that the
Galois correspondence is not bijective for any of its Hopf Galois
structures. The extension considered is a separable field
extension of degree 12 such that the Galois group of its normal
closure is isomorphic to $F_{18}:2$. As an example of such an
extension we can take $k=\Q$ and $K$ to be the field obtained by
adjoining to $\Q$ a root of the polynomial
$$
x^{12} - 2x^{11} - 2x^9 + 15x^8 - 4x^7 - 12x^6 - 4x^5 + 15x^4 -
2x^3 - 2x + 1.
$$

\begin{prop}Let $K/k$ be a separable field extension of degree 12 such that the Galois group $G=\Gal(\tilde K/k)$ of its normal closure is isomorphic to
$F_{18}:2\simeq S_3\times S_3$. Then, $K/k$ is a Hopf Galois
extension non almost classically Galois.
\end{prop}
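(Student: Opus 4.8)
The plan is to combine a short tower argument for the Hopf Galois property with an abelianization count for the failure of the almost classically Galois condition, so no explicit computation with $\Perm(G/G')$ is needed.

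First I would fix the group-theoretic data. Here $|G|=36$ and, since $[K:k]=12$, the subgroup $G'=\Gal(\widetilde K/K)$ has order $3$. Every element of order $3$ of $G\simeq S_3\times S_3$ lies in $A_3\times A_3$, which is the unique (hence normal) Sylow $3$-subgroup $P\simeq C_3\times C_3$ of $G$; thus $G'\subseteq P$, and since $P$ is abelian, $G'\triangleleft P$. (Since by hypothesis $\widetilde K$ is the normal closure of $K/k$, the core of $G'$ is trivial, so in fact $G'$ is one of the two non-normal diagonal subgroups of $P$, the factor subgroups $C_3\times 1$ and $1\times C_3$ being normal in $G$; but for the argument below I only need $G'\subseteq P$ and $G'\triangleleft P$.)

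For the Hopf Galois property I would use the tower $k\subseteq E\subseteq K$ with $E:=\widetilde K^{P}$. Because $P\triangleleft G$, the extension $E/k$ is Galois with group $G/P\simeq C_2\times C_2$, hence Hopf Galois. Because $G'\triangleleft P$, the extension $K/E$ is Galois with group $P/G'\simeq C_3$. Applying the tower theorem \cite{CRV}~Theorem~6.1 (the same result used earlier to establish the Hopf Galois character of the first family, with the Galois top step now played by $K/E$) then gives that $K/k$ is Hopf Galois. For the failure of the almost classically Galois condition I would argue that $G'$ has no normal complement: such a complement would be a normal subgroup of $G$ of order $|G|/|G'|=12$, i.e. of index $3$. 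Any normal subgroup of index $3$ has cyclic quotient $C_3$, so it contains $[G,G]$ and corresponds to an index-$3$ subgroup of $G^{\mathrm{ab}}=(S_3\times S_3)^{\mathrm{ab}}\simeq C_2\times C_2$; as this group has order $4$, no such subgroup exists. Hence $G$ has no normal subgroup of order $12$, $G'$ admits no normal complement, and $K/k$ is not almost classically Galois.

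The routine points are the identifications of $G/P$, of $P/G'$, and the computation of $G^{\mathrm{ab}}$. The one place that needs care is checking that the hypotheses of \cite{CRV}~Theorem~6.1 genuinely apply to the tower $k\subseteq E\subseteq K$ (bottom step Hopf Galois, top step Galois); once this is confirmed the conclusion is immediate, and this verification is what I expect to be the \emph{main obstacle}. As an alternative to the tower argument one could instead produce, via Greither--Pareigis, a regular subgroup $N\subseteq\Perm(G/G')$ normalized by $\lambda(G)$ (for example through Byott's translation into an embedding of $G$ into a holomorph of a group of order $12$ such as $D_{12}\simeq S_3\times\langle t\rangle$), but this route is considerably more computational, the verification of the normalization condition then becoming the delicate step.
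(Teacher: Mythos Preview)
Your argument is correct. For the ``not almost classically Galois'' part you do exactly what the paper does (no normal subgroup of order $12$ in $S_3\times S_3$), only with the abelianization computation spelled out; for the Hopf Galois part your route genuinely differs from the paper's.

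The paper proves the Hopf Galois property by writing down an explicit regular subgroup $N\subset S_{12}$ isomorphic to $D_{2\cdot 6}$ and checking that $\lambda(G)$ normalizes it. You instead use the tower $k\subset E:=\widetilde K^{P}\subset K$, where $P=A_3\times A_3$ is the (normal, abelian) Sylow $3$-subgroup: $E/k$ is Galois with group $C_2\times C_2$ and $K/E$ is Galois with group $C_3$, so the tower theorem \cite{CRV}~Theorem~6.1 (indeed the very result invoked in Section~3.1) applies. This is shorter and avoids any explicit permutation computation; in fact, since $E/k$ is Galois (hence almost classically Galois) and $K/E$ is Galois, you could equally well appeal directly to the Greither--Pareigis tower result, which would remove the concern you flag about the precise hypotheses of \cite{CRV}~Theorem~6.1. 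What the paper's approach buys, however, is an explicit Hopf Galois structure of dihedral type, and this explicit $N$ is exactly what is exploited in the remainder of Section~5 (the Lemma on the three stable involutions and the subsequent Proposition) to show that no structure yields a bijective Galois correspondence. Your tower argument proves the proposition cleanly but does not by itself identify the type of $N$, so for the purposes of the section's main goal the paper's more computational route is not wasted effort.
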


\noindent {\it Proof.} The extension $K/k$ is not almost
classically Galois since $F_{18}:2$ has no normal subgroups of
order 12.

Since all transitive subgroups of $S_{12}$ isomorphic to
$S_3\times S_3$ lie in the same conjugacy class, there is an
enumeration of the left cosets $G/G'$ such that the embedding
$\lambda$ of $G$ in $S_{12}$ obtained via the action on those left
cosets is
$$
\lambda(G)=\langle \sigma,\ \tau \rangle$$ where
$$\begin{array}{l}
\sigma=(1,2,3,4,5,6)(7,8,9,10,11,12),\\
\tau=(1,9)(2,10)(3,7)(4,8)(5,11)(6,12).
\end{array}
$$
The group
$$
N=\langle (1, 11, 5, 9, 3, 7)(2, 12, 6, 10, 4, 8), (1,10)(2,
9)(3,8)(4,7)(5,12)(6,11)\rangle,
$$

\noindent isomorphic to the dihedral group $D_{2\cdot 6}$, is a
regular subgroup of $S_{12}$ such that $ \lambda(G)\subset
\Norm_{S_{12}}(N). $ Therefore $N$ provides a Hopf Galois
structure for $K/k$. \hfill $\Box$

\vspace{0.5cm}

We look now for a regular subgroup $N\subset S_{12}$ normalized by
$\lambda(G)$ and such that the lattice of its subgroups stable
under the action of $\lambda(G)$ by conjugation is in one-to-one
correspondence with the lattice of intermediate fields of $K/k$.
That is, we want to know whether $K/k$ may be endowed with a Hopf
Galois structure such that the Galois correspondence is bijective.

By classical Galois theory, the lattice of intermediate fields of
$K/k$ corresponds to the subgroups of $G$ containing $G'$. This
gives

\medskip

\begin{center}
\begin{tabular}{c | c}
Subgroups of $G$ containing $G'$ & Intermediate fields of $K/k$ \\
\hline
$G'$ & $K$ \\
3 subgroups  of order 6 & 3 extensions of  $k$ of degree 6\\
1 subgroups  of order 9  &1 biquadratic extension of  $k$\\
3 subgroups  of order 18  &3 quadratic extensions of $k$\\
$G$ & $k$\\
\end{tabular}
\end{center}

Checking over the 5 isomorphism classes of groups of order 12, we
see that only the dihedral group has enough subgroups to be in
bijective correspondence with such a subfield lattice. Therefore,
the question becomes:

{\it Is there any  regular subgroup $N\subset S_{12}$ isomorphic
to the dihedral group $D_{2\cdot 6}$ such that $\lambda(G)\subset
\Norm_{S_{12}}(N)$ and $N$ has 3 subgroups of order 2, 1 subgroup
of order 3,  3 subgroups of order 6, all of them stable under
conjugation by $\lambda(G)$?}

\begin{lema} The three subgroups of order 2 of $N$ stable under conjugation by $G$ must be the subgroups
$\langle \omega_i\rangle$ where
$$\begin{array}{l}
\omega_1=(1, 7)(2, 8)(3, 9)(4, 10)(5, 11)(6, 12)\\
\omega_2=(1, 9)(2, 10)(3, 11)(4, 12)(5, 7)(6, 8)\\
\omega_3=(1, 11)(2, 12)(3, 7)(4, 8)(5, 9)(6, 10).
\end{array}
$$
\end{lema}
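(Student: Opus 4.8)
The plan is to observe first that stability of an order-$2$ subgroup under conjugation by $\lambda(G)$ forces its generator into the centralizer of $\lambda(G)$ in $S_{12}$, and then to compute that centralizer explicitly. Indeed, if $\langle\omega\rangle$ has order $2$ and is stable, then for every $g\in\lambda(G)$ the element $g\omega g^{-1}$ is again an involution lying in $\langle\omega\rangle=\{1,\omega\}$, so necessarily $g\omega g^{-1}=\omega$. Hence $\omega$ belongs to $Z:=\mathrm{C}_{S_{12}}(\lambda(G))$, the centralizer of $\lambda(G)$ in $S_{12}$. (Moreover, since $\lambda(G)$ is transitive, $Z$ acts semiregularly on the twelve points, so every involution of $Z$ is automatically fixed-point-free, as required of a nontrivial element of the regular group $N$.) It therefore suffices to determine the involutions of $Z$.

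To compute $Z$ I would intersect the centralizers of the two generators $\sigma$ and $\tau$. Writing $A=\{1,\dots,6\}$ and $B=\{7,\dots,12\}$ for the two blocks permuted cyclically by $\sigma$, the permutation $\sigma$ has cycle type $6^2$, so its centralizer in $S_{12}$ has order $6^2\cdot 2!=72$ and consists of the independent rotations $\sigma_A^{\,i}\sigma_B^{\,j}$ of the two blocks, together with these composed with the block swap $\delta=(1,7)(2,8)(3,9)(4,10)(5,11)(6,12)$. Imposing commutation with $\tau$ then reduces, after recording how $\tau$ permutes the block indices as a single $\pi\in S_6$, to the congruence $\pi(r+i)=\pi(r)+j$ for all indices $r$ (read modulo $6$), together with the companion relation obtained from the action on the second block. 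Solving this congruence is the computational heart of the argument; I expect it to admit exactly the three solutions $(i,j)\in\{(0,0),(2,4),(4,2)\}$, applied both in the block-preserving and in the block-swapping case.

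Consequently $Z$ should have order $6$: the three block-preserving solutions form a cyclic group of order $3$ generated by $\sigma_A^{\,2}\sigma_B^{\,4}$, while the three block-swapping solutions $\delta$, $\delta\,\sigma_A^{\,2}\sigma_B^{\,4}$, $\delta\,\sigma_A^{\,4}\sigma_B^{\,2}$ are each of order $2$; thus $Z\cong S_3$ has precisely three involutions, and reading off their cycle decompositions identifies them with $\omega_1,\omega_2,\omega_3$. Since any stable order-$2$ subgroup of $N$ must be generated by an involution of $Z$, the stable order-$2$ subgroups can only be $\langle\omega_1\rangle,\langle\omega_2\rangle,\langle\omega_3\rangle$, which is the assertion. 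I expect the main obstacle to be purely the bookkeeping in the block/swap parametrization and in checking that the block-swapping solutions coincide exactly with the listed $\omega_i$; the two conceptual steps, namely stability $\Rightarrow$ centralizing and the structure of the centralizer of an element of cycle type $6^2$, are routine.
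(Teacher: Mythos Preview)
Your proposal is correct and rests on the same reduction as the paper: a stable order-$2$ subgroup of a regular $N$ must be generated by a fixed-point-free involution centralized by $\lambda(G)$, and one then determines all such involutions. The paper states this reduction in slightly different language (a set of six disjoint transpositions permuted among themselves by $\sigma$ and $\tau$) and then simply writes ``by performing the computation''; you instead organize the computation by first describing $\mathrm{C}_{S_{12}}(\sigma)$ as the wreath product $C_6\wr S_2$ of block rotations and the block swap $\delta$, and then cutting down by commutation with $\tau$ to obtain $Z\cong S_3$ with exactly the three involutions $\omega_1,\omega_2,\omega_3$. This is a cleaner and more informative execution of the same idea, and it has the bonus of exhibiting the whole centralizer (so one sees immediately, for instance, that the $\omega_i$ are pairwise non-commuting, which is exactly what the subsequent proposition uses). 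Your hedging (``I expect'') is unnecessary: the solutions $(i,j)\in\{(0,0),(2,4),(4,2)\}$ and the identifications $\omega_2=\delta\,\sigma_A^{2}\sigma_B^{4}$, $\omega_3=\delta\,\sigma_A^{4}\sigma_B^{2}$ check out directly.
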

\begin{proof}
Since $N$ is regular, its elements of order 2 have no fixed point
and then must be the product of six disjoint transpositions.
Looking for an element $\omega$ which is the product of 6 disjoint
transpositions and is stable under conjugation by $\lambda(G)$
amounts to look for a set of 6 disjoint transpositions which are
permuted by conjugation of the two generators of $\lambda(G)$. By
performing the computation, we obtain the result in the lemma.
\end{proof}

\begin{prop}
There is no regular subgroup $N\subset S_{12}$ isomorphic to the
dihedral group $D_{2\cdot 6}$ such that $\lambda(G)\subset
\Norm_{S_{12}}(N)$ and $N$ has 3 subgroups of order 2 and 3
subgroups of order 6 all of them stable under  conjugation by $\lambda(G)$.
\end{prop}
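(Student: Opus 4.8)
\emph{The plan is to argue by contradiction, using only the requirement that $N$ carry three stable subgroups of order $2$; the condition on subgroups of order $6$ will turn out to be superfluous.} Suppose such a regular $N\simeq D_{2\cdot 6}$ existed. Since $N$ is regular, each of its nontrivial elements is fixed-point-free, so every element of order $2$ in $N$ is a product of six disjoint transpositions. For a subgroup $\langle\omega\rangle\subseteq N$ of order $2$, stability under conjugation by $\lambda(G)$ means $\lambda(g)\,\omega\,\lambda(g)^{-1}=\omega$ for all $g$, i.e. that $\omega$ is centralized by $\lambda(G)$. By the previous Lemma (and the computation in its proof) the only fixed-point-free involutions of $S_{12}$ centralized by $\lambda(G)$ are $\omega_1,\omega_2,\omega_3$. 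Hence the stable subgroups of order $2$ of $N$ lie among $\langle\omega_1\rangle,\langle\omega_2\rangle,\langle\omega_3\rangle$, and since $N$ is assumed to have three of them, we conclude $\omega_1,\omega_2,\omega_3\in N$.

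Next I would record the multiplicative relations among the $\omega_i$ by a short direct computation. One checks that
$$
\omega_1\omega_2=(1,3,5)(2,4,6)(7,11,9)(8,12,10),
$$
an element of order $3$, and that $\omega_3=\omega_1(\omega_1\omega_2)^2$; consequently $\langle\omega_1,\omega_2,\omega_3\rangle\simeq S_3$, with $\omega_1,\omega_2,\omega_3$ its three reflections and $\omega_1\omega_2$ a generator of the rotation subgroup. In particular each pairwise product $\omega_i\omega_j$ with $i\neq j$ has order $3$, so no two of the $\omega_i$ commute.

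The contradiction now comes from the structure of $N$. As $N\simeq D_{2\cdot 6}$ is dihedral of order $12$, its center $Z(N)$ has order $2$; write $Z(N)=\langle z\rangle$, where $z$ is the unique nontrivial central involution. Since $Z(N)$ is characteristic in $N$ and $\lambda(G)$ normalizes $N$, conjugation by $\lambda(G)$ induces automorphisms of $N$ preserving $Z(N)$; thus $\langle z\rangle$ is a stable subgroup of order $2$, and by the Lemma $z\in\{\omega_1,\omega_2,\omega_3\}$. Say $z=\omega_1$. But then $\omega_1$ commutes with $\omega_2\in N$, whence $(\omega_1\omega_2)^2=\omega_1^2\omega_2^2=1$, contradicting that $\omega_1\omega_2$ has order $3$. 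This rules out the existence of $N$ and proves the proposition.

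The conceptual heart of the argument — and the step I expect to be the only real obstacle — is the observation that the center of the dihedral group, being characteristic, is automatically a stable subgroup of order $2$ and so must coincide with one of the three $\langle\omega_i\rangle$ produced by the Lemma. Once this is seen, the order-$3$ relations among the $\omega_i$ clash at once with the centrality of $z$. The verification that $\langle\omega_1,\omega_2,\omega_3\rangle\simeq S_3$ is routine, and notably the hypothesis on subgroups of order $6$ plays no role.
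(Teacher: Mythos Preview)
Your proof is correct and follows essentially the same line as the paper's: both show that the center of $N$ must be one of the $\langle\omega_i\rangle$ and then derive a contradiction from the fact that the $\omega_i$ pairwise fail to commute. The one difference is that the paper deduces stability of $Z(N)$ from the hypothesis that the cyclic subgroup of order $6$ is stable, whereas you observe directly that $Z(N)$ is characteristic in $N$ and hence automatically preserved by the normalizer --- so, as you note, the order-$6$ hypothesis is indeed unnecessary, which is a small sharpening of the paper's argument.
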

\begin{proof}

Let $N$ be a regular subgroup of $S_{12}$ isomorphic to the
dihedral group $D_{2\cdot 6}$.  The dihedral group $D_{2\cdot
6}=\langle r,s\mid  r^6=s^2=1, rs=sr^5\rangle$ has exactly 3
subgroups of order 6 and one of them is the cyclic group $\langle
r \rangle$. If $N$ has 3 subgroups of order 6 stable under
conjugation by $\lambda(G)$, then, in particular, its cyclic
subgroup of order 6 must be. This implies that the center of $N$
is stable under $\lambda(G)$. Now, if $N$ has 3 subgroups of order
2 stable under conjugation by $\lambda(G)$, according to the
lemma, these must be generated by $\omega_1,\omega_2$ and
$\omega_3$, respectively. Then one of the elements $\omega_i$
should commute with the other two, which does not hold.
\end{proof}

As a corollary, we obtain the following result:

\begin{thm}
There exist separable Hopf Galois extensions $K/k$ such that the
Galois correspondence is not bijective for any of its Hopf Galois
structures.
\end{thm}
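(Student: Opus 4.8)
The plan is to prove the theorem by producing a single explicit witness and showing that \emph{no} Hopf Galois structure on it gives a bijective correspondence; the statement then follows at once, since such an extension has been exhibited concretely. I would fix the separable degree-$12$ extension $K/k$ whose Galois closure has group $G\cong F_{18}:2\cong S_3\times S_3$ (for instance the extension of $\Q$ defined by the displayed degree-$12$ polynomial). By the proposition at the start of this section $K/k$ is Hopf Galois, so it is a genuine instance of the phenomenon, and everything reduces to an impossibility statement about its Hopf Galois structures.

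The first real step is to pass entirely into group theory via Greither--Pareigis and the reformulation of Section~2. Every Hopf Galois structure is of the form $\wk[N]^G$ for a regular subgroup $N\subset S_{12}=\Perm(G/G')$ normalized by $\lambda(G)$, and for each such $N$ the map $\mathcal F_N$ is automatically injective and inclusion-reversing; hence bijectivity is equivalent to surjectivity, i.e.\ to the lattice of $\lambda(G)$-stable subgroups of $N$ being order-anti-isomorphic to the lattice of intermediate fields of $K/k$. Since $|N|=[K:k]=12$ and a stable subgroup of order $m$ cuts out an intermediate field of degree $12/m$ over $k$, I would read off from the table exactly which stable subgroups any successful $N$ must possess: three of order $2$ (for the three degree-$6$ fields), one of order $3$ (for the biquadratic field), and three of order $6$ (for the three quadratic fields), besides the trivial subgroup and $N$ itself.

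Next I would run the census over the five isomorphism types of groups of order $12$ to isolate the only admissible shape for $N$. The group $A_4$ has no subgroup of order $6$, while $C_{12}$ and $\mathrm{Dic}_3$ each have a unique subgroup of orders $2$ and $6$, so none can furnish three stable subgroups of order $2$ together with three of order $6$. The abelian group $C_6\times C_2$ has the correct subgroup counts, but here I would flag the subtlety that eliminates it: its three order-$2$ subgroups generate the Sylow $2$-subgroup of order $4$, so if all three are stable then this order-$4$ subgroup is stable too, and it would correspond to an intermediate field of degree $3$, of which there are none. This leaves the dihedral group $D_{2\cdot 6}$ as the sole candidate, and turns the theorem into the question of whether a regular $N\cong D_{2\cdot 6}$ in $S_{12}$, normalized by $\lambda(G)$, can have three $\lambda(G)$-stable subgroups of order $2$ and three of order $6$.

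Finally, that question is exactly the one answered in the negative by the preceding proposition, so invoking it closes the argument. Its engine is the lemma pinning down the stable order-$2$ subgroups as $\langle\om_1\rangle,\langle\om_2\rangle,\langle\om_3\rangle$, combined with the observation that three stable order-$6$ subgroups of a dihedral $N$ force its cyclic $C_6$, hence its center, to be stable --- so the central involution would have to be one of the $\om_i$ and commute with the other two, which the explicit $\om_i$ do not. The genuine obstacle of the whole proof is thus concentrated in that proposition; for the corollary itself the only point demanding care is the census, and in particular not overlooking the $C_6\times C_2$ case, whose subgroup counts are correct and which is ruled out only by the forced extra stable subgroup of order $4$.
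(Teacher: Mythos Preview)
Your proposal is correct and follows essentially the same route as the paper: exhibit the degree-$12$ extension with $G\cong S_3\times S_3$, reduce bijectivity to a counting condition on $\lambda(G)$-stable subgroups of a regular $N$ of order $12$, eliminate all isomorphism types except $D_{2\cdot6}$, and then invoke the preceding proposition (resting on the lemma about $\omega_1,\omega_2,\omega_3$) to kill that last case. The one place where you are more careful than the paper is the elimination of $C_6\times C_2$: the paper dispatches all five types in a single sentence (``only the dihedral group has enough subgroups''), whereas you correctly observe that $C_6\times C_2$ actually has the right subgroup counts and must be excluded by the forced stable order-$4$ subgroup; your argument that the three order-$2$ subgroups generate it is fine, and one could shorten it further by noting this Sylow $2$-subgroup is characteristic and hence automatically stable.
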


\vspace{1cm}

\footnotesize

\noindent Teresa Crespo, Departament d'\`Algebra i Geometria,
Universitat de Barcelona, Gran Via de les Corts Catalanes 585,
E-08007 Barcelona, Spain, e-mail: teresa.crespo@ub.edu

\vspace{0.3cm} \noindent Anna Rio, Departament de Matem\`atica
Aplicada II, Universitat Polit\`ecnica de Catalunya, C/Jordi
Girona, 1-3 Edifici Omega, E-08034 Barcelona, Spain, e-mail:
ana.rio@upc.edu

\vspace{0.3cm} \noindent Montserrat Vela, Departament de
Matem\`atica Aplicada II, Universitat Polit\`ecnica de Catalunya,
C/Jordi Girona, 1-3 Edifici Omega, E-08034 Barcelona, Spain,
e-mail: \linebreak montse.vela@upc.edu

\end{document}